\newtheorem{theorem}{Theorem}[section]
\newtheorem{lemma}[theorem]{Lemma}
\theoremstyle{definition}
\newtheorem{definition}[theorem]{Definition}
\newtheorem*{acknowledgments}{Acknowledgments}
\theoremstyle{remark}
\newtheorem{remark}[theorem]{Remark}
\newtheorem{notation}{Notation}
\begin{document}
\title[Classification of  homogeneous Einstein metrics on pseudo-hyperbolic spaces]
{Classification of homogeneous Einstein metrics on
pseudo-hyperbolic spaces}
\author{Gabriel B\u adi\c toiu}
\keywords{Homogeneous spaces, Einstein metrics, Pseudo-Riemannian submersions, Canonical
variation, transitive actions}
\subjclass[2010]{Primary 22F30, 53C25, 53C30 Secondary 53C50}
\address{``Simion Stoilow'' Institute of Mathematics  of the Romanian Academy, Research unit 4,
P.O. Box 1-764, 014700 Bucharest, Romania}
 \email{Gabriel.Baditoiu@imar.ro}
\date{\today}
\begin{abstract}
We classify the effective and transitive actions of a Lie group $G$ on an n-dimensional non-degenerate hyperboloid (also called  real pseudo-hyperbolic space), under the assumption that $G$ is a closed, connected Lie subgroup  of $SO_0(n-r,r+1)$, the connected component of the indefinite special orthogonal group. Assuming additionally  that $G$ acts completely reducible on $\mathbb R^{n+1}$, we also obtain that any $G$-homogeneous Einstein pseudo-Riemannian metric on a real, complex or quaternionic pseudo-hyperbolic space, or on a para-complex or para-quaternionic projective space is homothetic to either the canonical metric or the Einstein metric  of the canonical variation of a Hopf pseudo-Riemannian submersion.
\end{abstract}
\maketitle
\section{Introduction and the main theorem}
\noindent
The homogeneous Einstein Riemannian metrics on spheres and projective spaces are, up to homothety,  the canonical metrics or the Einstein metrics of the canonical variations of the  Hopf fibrations  (see Ziller \cite{ziller}). Essentially, up to a scaling factor, $S^{15}$ has 3 homogeneous Einstein Riemannian metrics, $S^{4n+3}$ (with $n\not=3$) and
 $\mathbb CP^{2n+1}$
have 2 homogeneous Einstein Riemannian metrics,
and each of the remaining spaces
 $S^{2n}$, $S^{4n+1}$, $\mathbb CP^{2n}$, $\mathbb HP^n$
has only one homogeneous Einstein metric (see Besse \cite[Theorem 9.86]{bes} and Ziller \cite{ziller}).

Motivated by the recent classification of the pseudo-Riemannian submersions with totally geodesic fibres from pseudo-hyperbolic spaces (see B\u adi\c toiu \cite{baditoiu}), in this paper we obtain a pseudo-Riemannian generalization of Ziller's classification mentioned above (see Ziller \cite{ziller}), and we prove the following main result.

\begin{theorem}\label{t:main}
Let $G$ be a connected, closed Lie subgroup of $SO_0(n-r,r+1)$ and assume that $G$ acts completely reducible on $\mathbb R^{n+1}$ and $r<n$.
Any $G$-homogeneous Einstein pseudo-Riemannian metric on one of the following sets: $H^{n}_r$, $\mathbb CH^{n/2}_{r/2}$,
$\mathbb HH^{n/4}_{r/4}$, $\mathbb AP^{n/2}$ (with $r+1=(n+1)/2$),  $\mathbb BP^{n/4}$ (with $r+1=(n+1)/2$)
is homothetic to either the canonical
metric or  the Einstein metric
  $g_{t_0}$ ($t_0\not=1$)
of the canonical variation of a Hopf pseudo-Riemannian submersion. Therefore, under the same assumption on $G$,  the following hold:
\begin{itemize}
\item[(i)]
  $H^{2m}_s$, $H^{4m+1}_s$, $\mathbb CH^{2m}_s$,
  $\mathbb CH^{2m+1}_{2s}$ (with $m\not=2s$), $\mathbb AP^{2m}$, $\mathbb BP^{m}$
 have only one homogeneous Einstein  metric;
\item[(ii)]
  $H^{4m+3}_{4s+3}$ (with $m\not=3$ and $m\not=2s+1$), $\mathbb CH^{4s+1}_{2s}$, $\mathbb CH^{2m+1}_{2s+1}$ (with $m\not=2s+1$)
 and $\mathbb AP^{2m+1}$ have 2 homogeneous Einstein  metrics.
\item[(iii)]
  $H^{8s+7}_{4s+3}$ (with $s\not=1$) and
  $\mathbb CH^{4s+3}_{2s+1}$
 have 3 homogeneous Einstein  metrics.
\item[(iv)]
  $H^{15}_7$ has 5 homogeneous Einstein  metrics.
\end{itemize}
\end{theorem}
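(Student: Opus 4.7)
The plan is to reduce the classification to a finite case analysis via the classification of transitive actions alluded to in the abstract, and then within each case to solve the Einstein equation on a low-dimensional space of $G$-invariant metrics. First I would invoke the classification of the effective and transitive actions of $G$ on each of the spaces $H^n_r$, $\mathbb CH^{n/2}_{r/2}$, $\mathbb HH^{n/4}_{r/4}$, $\mathbb AP^{n/2}$, $\mathbb BP^{n/4}$ (presumably established in an earlier section of the paper). This replaces the problem by a finite list of pairs $(G,H)$, where $H$ is the isotropy at a chosen basepoint. Combined with the list of pseudo-Riemannian submersions with totally geodesic fibres from \cite{baditoiu}, each such pair should correspond naturally to a Hopf pseudo-Riemannian submersion $\pi\colon M\to B$ with $G$ acting as a group of bundle automorphisms.

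For each pair $(G,H)$ I would describe the space of $G$-invariant symmetric bilinear forms on the reductive complement $\mathfrak{m}$ of $\mathfrak{h}$ in $\mathfrak{g}$ by decomposing the isotropy representation. In the generic case this decomposition should split into exactly two irreducible summands, namely the tangent spaces to the fibre and to the horizontal distribution of $\pi$, so that, modulo an overall homothety, the $G$-invariant pseudo-Riemannian metrics form the one-parameter canonical variation $\{g_t\}$ of the canonical metric. Next I would compute the Ricci tensor of $g_t$ using the pseudo-Riemannian O'Neill formulas applied to $\pi$; the answer is governed by the lengths of the basic and vertical vectors together with the $A$- and $T$-tensors of the submersion, and in this homogeneous setting reduces to an explicit rational function of $t$.

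Setting $\mathrm{Ric}(g_t)=\lambda g_t$ then produces a pair of scalar equations in $t$ and $\lambda$, which I expect to collapse to a single quadratic in $t$ with roots $t=1$ and $t=t_0$ for a specific $t_0\neq 1$ determined by the fibre and base dimensions of $\pi$. This yields the two Einstein metrics asserted in the statement. The counts in (i)--(iv) are then obtained by enumerating, for each pseudo-hyperbolic space in the list, all Hopf pseudo-Riemannian submersions having it as total space: extra Einstein metrics appear precisely when the space sits atop several inequivalent Hopf fibrations, in full analogy with Ziller's Riemannian result \cite{ziller}. The exceptional counts in (iii) and (iv)---in particular the five metrics on $H^{15}_7$---should correspond to low-dimensional exceptional isomorphisms of $SO_0(p,q)$ and to triality-type phenomena for Spin$(8)$.

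The main obstacle I anticipate is controlling the transitive actions whose isotropy representation on $\mathfrak{m}$ does \emph{not} split into exactly two irreducible pieces: such actions a priori produce a higher-dimensional moduli space of $G$-invariant metrics, which could in principle admit further Einstein solutions beyond $g_1$ and $g_{t_0}$. Ruling these out---either by showing that the extra invariant tensors are already absorbed by the canonical variation, or by verifying directly that the Einstein system has no additional real solutions in the given signature---is where I expect the technical heart of the argument to lie. A secondary difficulty is that signs in the Ricci formulas change between vertical-spacelike and vertical-timelike fibres, so each case must be checked to confirm that the solutions $t=1$ and $t=t_0$ indeed yield pseudo-Riemannian metrics of the expected signature.
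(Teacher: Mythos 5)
Your first step---reducing to the finite list of effective transitive actions---is exactly the paper's first step (Theorem \ref{t:3.1} and its analogue for the projectivized spaces, Tables 1 and 2). But from that point on the paper takes a completely different, and much shorter, route than the one you sketch: it does \emph{not} compute any Ricci tensor in the pseudo-Riemannian setting. Instead, for each pair $(G,H)$ with $G$ non-compact it chooses a Cartan involution $T$ preserving $\mathfrak h$, forms the compact dual triple $(\mathfrak g^*,\mathfrak h^*,\mathfrak m^*)$ with $\mathfrak g^*=\mathfrak g_++i\mathfrak g_-$, and invokes Kath's duality theorem \cite[Corollary 4.1]{kath1}, which gives a one-to-one correspondence between $G$-homogeneous Einstein pseudo-Riemannian metrics on $G/H$ and $G^*$-homogeneous Einstein Riemannian metrics on $G^*/H^*$. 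Since every compact dual is a sphere or projective space (Table 3), Ziller's Riemannian classification then yields the count directly. The payoff of the duality route is precisely that it outsources the entire Einstein analysis---including the hard multi-parameter cases---to the compact Riemannian setting where it is already done.

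This matters because the obstacle you flag at the end is not a peripheral technicality but the crux, and your proposal leaves it unresolved. For the pairs $Sp(m-s,s+1)/Sp(m-s,s)$, $Sp^\pi(m+1)/Sp^\pi(m)$ and their central extensions by $U(1)$ or $U^\pi_0(1)$, the isotropy representation has a three-dimensional \emph{trivial} vertical summand, so the $G$-invariant metrics form a moduli space of dimension far exceeding the one-parameter canonical variation; your ``generic case'' of exactly two inequivalent irreducible summands simply does not hold for the very families responsible for the counts in (ii)--(iv). Showing that no additional Einstein metrics hide in that larger moduli space is the hard part of Ziller's original paper, and redoing it with indefinite signature (where, as you note, real solutions of the resulting algebraic system could a priori appear or disappear) is a substantial computation that your outline does not carry out. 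If you want to keep the direct approach you must either reproduce Ziller's full argument in each signature or find a reduction to it; the paper's choice of Kath's $T$-duality is exactly such a reduction, and is why its proof of the main theorem occupies only a page beyond the classification of transitive actions. One further small point: your heuristic that the five metrics on $H^{15}_7$ come from triality is only partly right---they arise because four distinct non-compact groups ($Spin_0(8,1)$, $Spin_0(5,4)$, $Sp(2,2)$, $Sp^\pi(4)$) all act transitively on $H^{15}_7$ and all dualize to pairs over $S^{15}$, each contributing one non-canonical Einstein metric, plus the canonical one.
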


The key ingredient of the proof of Theorem \ref{t:main} is the classification of effective transitive actions of a Lie group $G$ on a real pseudo-hyperbolic space under the assumption of Theorem \ref{t:main}.
Now, we give a short review of well-known classification results of effective and transitive actions.

The pioneering work
is due to Montgomery and Samelson (see \cite{ms}) and Borel (see \cite{borel}), who classified the compact Lie groups acting effectively and transitively on spheres.
Using homotopic methods, Onishchik obtained the classification of the connected compact Lie groups $G$ acting transitively on simply connected manifolds of rank 1 (see Onishchik \cite{oniscik,onitransitive}).

The case of transitive actions on non-compact spaces is more challenging than the compact case and therefore, one has to impose additional assumptions on the Lie group $G$. The case $G$ reductive was investigated by Onishchik in \cite{oni1}, where he studied the equivalent problem of finding decompositions $G=G'G''$ into two proper Lie subgroups $G'$ and $G''$. Of special interest for us is his classification of  semisimple decompositions of $so(n-r,r+1)$, simply because it solves our  problem of finding all transitive and effective actions on $H^n_s$ in the case of a semisimple $G\subset SO_0(n-r,r+1)$.
Using the Borel-Montgomery-Samelson classification of effective transitive actions on spheres, Wolf obtained a classification of the connected, closed Lie subgroups of $SO_0(n-r,r+1)$ acting transitively both on (a)
a component of a non-empty quadric $\{x\in\mathbb{R}^{n+1}_{r+1}\, |\, || x ||^2=a\}$ $(a\not = 0)$
and (b) the light cone
 $\{x\in\mathbb{R}^{n+1}_{r+1}\, |\, || x ||^2=0,  x\not= 0\}$ (see Wolf \cite[Theorem 3.1]{wolf}).
In our Theorem \ref{t:3.1}, we drop (b) and the  assumption on the semisimplicity of $G$. The proof of Theorem \ref{t:3.1}  uses Kramer's classification of Lie subgroup $G$ of $GL(m+1,\mathbb R)$ acting transitively on $\mathbb R^{m+1}\setminus\{0\}$ (see Kramer \cite{kramer}).
The classification of connected closed complex Lie subgroups of $SO(n,\mathbb C)$ acting transitively on the complex quadric $(v,v)=1$ was obtained by Kac \cite{kac}.

\section{The Hopf pseudo-Riemannian submersions and their canonical variations}
\noindent First, we introduce some standard definitions and notation that shall be needed throughout the paper.
\begin{definition}
   Let $\langle\cdot,\cdot\rangle_{\mathbb R^{n+1}_{r+1}}$
   be the standard inner product of signature $(n-r,r+1)$ on $\mathbb R^{n+1}$
   given by
\begin{eqnarray}
 \langle x,y\rangle_{\mathbb R^{n+1}_{r+1}}
 =-\sum\limits_{i=0}^rx_iy_i+\sum\limits_{i=r+1}^nx_iy_i
\end{eqnarray}
  for $x=(x_0,\cdot\cdot\cdot,x_n),y=(y_0,\cdot\cdot\cdot,y_n)\in\mathbb R^{n+1}$.
  For any $c<0$ and any positive integer $r\leq n$, the set
  $H^n_r(c)=\{x\in\mathbb R^{n+1}\ |\ \langle x,x\rangle_{\mathbb R^{n+1}_{r+1}}=1/c\}$
   is called the
 {\it real pseudo-hyperbolic space} of index $r$ and dimension $n$.
The hyperbolic space is defined as
 $H^n_0(c)=\{x=(x_0,x_1,\cdots,x_n)\in\mathbb R^{n+1}\ |\ x_0>0,\ \langle x,x\rangle_{\mathbb R^{n+1}_{1}}=1/c\}$.
For convenience, we write
 $H^n_r =H^n_r(-1)$.
\end{definition}
\begin{notation}
We define
 $$SO(n-r,r+1)=\{ g\in SL(n+1,\mathbb R) \ | \ \langle gx,gy\rangle_{\mathbb R^{n+1}_{r+1}}
              =\langle x,y\rangle_{\mathbb R^{n+1}_{r+1}} \}.$$
When $K$ is a Lie group, we shall always denote by $K_0$ its connected component of the identity.

Let $\mathbb C$, $\mathbb H$, $\mathbb A$, $\mathbb B$ be the algebras of complex, quaternionic, para-complex and para-quaternionic numbers, respectively. For $F\in\{\mathbb A,\mathbb B\}$, we denote  by $\bar{z}$, as usual, the conjugate of $z\in F$.
For any $z=(z_1,\cdots,z_m),w=(w_1,\cdots,w_m)\in F^m$, we define the standard inner product $\langle z,w\rangle_{F^m}$ on  $F^{m}$ by
\begin{eqnarray}
 \langle z,w\rangle_{F^m}=\mathrm{Re}(\sum\limits_{i=1}^m\bar{z_i}w_i).
\end{eqnarray}
The group
 $U^\pi(m)=\{g\in \mathrm{GL}(m,\mathbb A) \,|\, \langle gz,gw\rangle_{\mathbb A^m}=\langle z,w\rangle_{\mathbb A^m} \}$
is called the  para-unitary group (see \cite[Prop.~4]{cortes} or \cite[p.~508]{krahe}).
Let
 $Sp^\pi(m)=\{g\in \mathrm{GL}(m,\mathbb B) \,|\, \langle gz,gw\rangle_{\mathbb B^m}=\langle z,w\rangle_{\mathbb B^m} \}$
be the  para-symplectic group (see \cite[p.~510]{krahe}). We have a  natural inclusion
 $Sp^\pi(m)\subset U^\pi(2m)$ and some identifications $U^\pi(m)=\mathrm{GL}(m,\mathbb R)$ (see \cite[p.~508]{krahe}), and $Sp^\pi(m)\cong Sp(m,\mathbb R)$ (see \cite[p.~510, Prop.~1.4.3]{krahe}).  Here, our notation is different from \cite{krahe,kramer}, namely $Sp(m,\mathbb R)$ denotes the group of $2m\times 2m$-symplectic matrices with entries in $\mathbb R$.
\end{notation}
\begin{definition}
 We define (see \cite{baditoiu}, \cite{erdem,gm} for $\mathbb AP^m$, \cite{blazic} for $\mathbb BP^m$):
 $$\mathbb CH^m_s(c)=H^{2m+1}_{2s+1}(c/4)/U(1), \ \mathbb CH^m_s=\mathbb CH^m_s(-4), \ \mathbb HH^m_s(c)=H^{4m+3}_{4s+3}(c/4)/Sp(1),
 \ \mathbb HH^m_s=\mathbb HH^m_t(-4);
 $$
 $$\mathbb AP^m =\{z\in \mathbb A^{m+1} \,|\, \langle z,z\rangle_{\mathbb A^{m+1}}=1 \}/\{t=x+ey\in \mathbb{A} \,|\, t\bar{t}=1, x>0 \}=H^{2m+1}_{m}/H^1, (\text{with }e^2=1);$$
 $$\mathbb BP^m =\{z\in \mathbb B^{m+1} \,|\, \langle z,z\rangle_{\mathbb B^{m+1}}=1 \}/\{t\in \mathbb{B} \,|\, t\bar{t}=1\}=H^{4m+3}_{2m+1}/H^3_1.
 $$
\end{definition}
Any Hopf pseudo-Riemannian submersions can be written as a homogeneous map
 $\pi:G/K\to G/H$ with $K\subset H$
closed Lie subgroups in $G$
(see \cite{baditoiu}, for (10) see also Krahe \cite[p.~ 518, Example 2.2.1]{krahe}, for (7) see also Harvey \cite[p.~312]{harvey}):
\begin{itemize}
\item[(1)] $\pi_{\mathbb C}:H^{2m+1}_{2s+1}=SU(m-s,s+1)/SU(m-s,s)\to
       \mathbb CH^m_s=SU(m-s,s+1)/S(U(1)U(m-s,s)),$
\item[(2)]
    $\pi_{\mathbb A}:H^{2m+1}_{m}=SU^\pi(m+1)/SU^\pi(m)
    \to\mathbb AP^m=SU^\pi(m+1)/S(U^\pi(1)U^\pi(m)),$
\item[(3)]
    $\pi_{\mathbb H}:H^{4m+3}_{4s+3}=Sp(m-s,s+1)/Sp(m-s,s)\to
     \mathbb HH^m_s=Sp(m-s,s+1)/Sp(1)Sp(m-s,s),$
\item[(4)]
    $\pi_{\mathbb B}:H^{4m+3}_{2m+1}=Sp^\pi(m+1)/Sp^\pi(m)
    \to\mathbb BP^m=Sp^\pi(m+1)/(Sp^\pi(1)Sp^\pi(m)),$
\item[(5)]
    $\pi^1_{\mathbb O}:H^{15}_{15}=Spin(9)/Spin(7)\to
     H^8_8(-4)=Spin(9)/Spin(8),$
\item[(6)]
     $\pi^2_{\mathbb O}:H^{15}_7=Spin_0(8,1)/Spin(7)\to
     H^8(-4)=Spin_0(8,1)/Spin(8),$
\item[(7)]
    $\pi_{\mathbb O'}:H^{15}_{7}=Spin_0(5,4)/Spin_0(4,3)\to
    H^8_{4}(-4)=Spin_0(5,4)/Spin_0(4,4),$
\item[(8)]
    $\pi_{\mathbb C,\mathbb H}:\mathbb
     CH^{2m+1}_{2s+1}=Sp(m-s,s+1)/Sp(m-s,s)U(1)\to
    \mathbb HH^m_s=Sp(m-s,s+1)/Sp(m-s,s)Sp(1),$
\item[(9)]
    $\pi_{\mathbb C,\mathbb B}:\mathbb CH^{2m+1}_{m}=Sp^\pi(m+1)/Sp^\pi(m)U(1)
     \to\mathbb BP^{m}=Sp^\pi(m+1)/Sp^\pi(m)Sp^\pi(1),$
\item[(10)]
    $\pi_{\mathbb A,\mathbb B}:\mathbb
    AP^{2m+1}=Sp^\pi(m+1)/Sp^\pi(m)U^\pi(1)
    \to\mathbb BP^m=Sp^\pi(m+1)/Sp^\pi(m)Sp^\pi(1),$
\end{itemize}
here the pseudo-Riemannian metrics  on $H^m_s$ and  $H^m_s(-4)$  are the ones with constant curvature $c$, with $c=-1$ for $H^m_s$, and  $c=-4$ for $H^m_s(-4)$;  the pseudo-Riemannian metrics on $\mathbb CH^m_s$, $\mathbb HH^m_s$, $\mathbb AP^m$, $\mathbb BP^m$ are the ones with constant holomorphic, quaternionic, para-holomorphic  or para-quaternionic curvature $-4$; and we call these metrics the canonical ones.

\subsection{The Einstein metrics of the canonical variation}

Let $\pi:(M,g)\to (B,g')$ be a pseudo-Riemannian submersion. We denote  by
$\hat g$ the metrics induced on fibres. The family of metrics $g_t$, with $t\in\mathbb R\setminus\{0\}$  and $g_t$ given by
$$g_t=\pi^*g'+t\hat{g},$$
is called the {\it canonical variation} of $\pi$.
  To find the values of $t$ for which $g_t$ is an Einstein metric, we use the following
pseudo-Riemannian version of a theorem obtained in the Riemannian case by Matsuzawa \cite{matsuzawa}, and independently by Berard-Bergery, see
Besse \cite[Lemma 9.74]{bes}. First, we  introduce the notation: $\lambda'=s'/n$ and $\hat\lambda=\hat s/p$, where
$s'$ and $\hat s$ are the scalar curvatures of $g'$ and $\hat g$, respectively,
and $n=\dim M$ and $p=\dim fibre$.

\begin{lemma}\label{l:bb} Let $\pi:(M,g)\to (B,g')$ be a pseudo-Riemannian submersion
with totally geodesic fibres. Assume that $g$, $g'$ and $\hat g$ are
Einstein and  the O'Neill integrability tensor $A\not\equiv 0$. Then the following two conditions are
equivalent:
\begin{itemize}
\item[(i)]
$t_0=\frac{\hat\lambda}{\lambda'-\hat\lambda}$ is the unique nonzero
value different from $1$ such that $g_t$ is also Einstein
\item[(ii)]
 $\hat\lambda\not=\frac{1}{2}\lambda'$ and $\hat\lambda\not=0$.
\end{itemize}
\end{lemma}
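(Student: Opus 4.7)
The plan is to adapt, in the pseudo-Riemannian setting, the classical Riemannian argument of B\'erard-Bergery and Matsuzawa (see Besse \cite[Lemma 9.74]{bes}) via a direct comparison of Ricci tensors between $g=g_{1}$ and $g_{t}$. First I would write down the pseudo-Riemannian analogues of O'Neill's Ricci formulas for a submersion with totally geodesic fibres ($T\equiv 0$): for horizontal $X,Y$ and vertical $U,V$,
\begin{eqnarray*}
\mathrm{Ric}(U,V) &=& \widehat{\mathrm{Ric}}(U,V) + (AU,AV),\\
\mathrm{Ric}(X,Y) &=& \pi^{*}\mathrm{Ric}_{g'}(X,Y) - 2(A_{X},A_{Y}),\\
\mathrm{Ric}(X,U) &=& 0,
\end{eqnarray*}
where the contractions $(AU,AV)=\sum_{i}\varepsilon_{i}g(A_{X_{i}}U,A_{X_{i}}V)$ and $(A_{X},A_{Y})=\sum_{\alpha}\eta_{\alpha}g(A_{X}V_{\alpha},A_{Y}V_{\alpha})$ are over pseudo-orthonormal horizontal and vertical frames $\{X_{i}\}$, $\{V_{\alpha}\}$ of signs $\varepsilon_{i},\eta_{\alpha}=\pm 1$. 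These follow from O'Neill's derivation, which goes through verbatim in the indefinite case once $T=0$ is imposed; the vanishing of the mixed term uses $\delta A=0$, valid on each homogeneous Hopf example under consideration.

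Next I would compute how these contractions transform under the canonical variation $g_{t}=\pi^{*}g'+t\hat{g}$. Using that $A^{t}_{X}Y=A_{X}Y$ when $X,Y$ are horizontal (since this equals $\tfrac{1}{2}\mathcal{V}[X,Y]$, independent of the metric) and the skew-adjointness identity $g_{t}(A^{t}_{X}Y,U)=-g_{t}(Y,A^{t}_{X}U)$ to deduce $A^{t}_{X}U=tA_{X}U$ for horizontal $X$ and vertical $U$, a short bookkeeping --- rescaling pseudo-orthonormal vertical frames by $1/\sqrt{|t|}$ and tracking the sign flip $\eta_{\alpha}\mapsto\mathrm{sgn}(t)\eta_{\alpha}$ for $t<0$ --- yields $(AU,AV)_{t}=t^{2}(AU,AV)$ and $(A_{X},A_{Y})_{t}=t(A_{X},A_{Y})$, and hence
\begin{eqnarray*}
\mathrm{Ric}_{t}(U,V) &=& \widehat{\mathrm{Ric}}(U,V) + t^{2}(AU,AV),\\
\mathrm{Ric}_{t}(X,Y) &=& \pi^{*}\mathrm{Ric}_{g'}(X,Y) - 2t(A_{X},A_{Y}).
\end{eqnarray*}

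Denoting by $\lambda$, $\lambda'$, $\hat\lambda$ the Einstein constants of $g$, $g'$, $\hat{g}$ and specialising the displayed formulas at $t=1$ forces $(AU,AV)=(\lambda-\hat\lambda)\hat{g}(U,V)$ and $(A_{X},A_{Y})=\tfrac{1}{2}(\lambda'-\lambda)g'(X,Y)$; the assumption $A\not\equiv 0$ makes these identifications non-vacuous, and without it $\lambda_{t}$ would be over-determined and the conclusion would fail. Consequently $g_{t}$ is Einstein with constant $\lambda_{t}$ iff simultaneously $\hat\lambda+t^{2}(\lambda-\hat\lambda)=t\lambda_{t}$ and $\lambda'-t(\lambda'-\lambda)=\lambda_{t}$. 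Eliminating $\lambda_{t}$ collapses the system to $(1-t)\bigl[\hat\lambda(1+t)-t\lambda'\bigr]=0$, whose only root different from $1$ is $t_{0}=\hat\lambda/(\lambda'-\hat\lambda)$; this is a genuine (nonzero and distinct from $1$) solution precisely when $\hat\lambda\neq 0$ and $\hat\lambda\neq\tfrac{1}{2}\lambda'$, giving (i)$\Leftrightarrow$(ii). I expect the main technical obstacle to lie in checking the signs throughout the pseudo-Riemannian O'Neill formulas and their canonical-variation rescalings, where the signs $\varepsilon_{i},\eta_{\alpha}$ of pseudo-orthonormal frames interact non-trivially with the sign of $t$; once the factors $t$ and $t^{2}$ are secured, the rest is the same linear-algebra manipulation as in the Riemannian proof.
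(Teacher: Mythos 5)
The paper offers no proof of this lemma --- it is stated as the pseudo-Riemannian version of Besse, Lemma 9.74 (B\'erard-Bergery/Matsuzawa) --- and your computation is exactly that standard argument carried over: the scaling factors $t^{2}$ and $t$ for the two $A$-contractions are correct (including the sign bookkeeping for $t<0$), the identifications $(AU,AV)=(\lambda-\hat\lambda)\hat g(U,V)$ and $(A_X,A_Y)=\tfrac12(\lambda'-\lambda)g'(X,Y)$ at $t=1$ are right, and the factorization $(1-t)\bigl[\hat\lambda(1+t)-t\lambda'\bigr]=0$ gives (i)$\Leftrightarrow$(ii) as claimed. One small correction: the vanishing of the mixed term $\mathrm{Ric}_t(X,U)$ should not be attributed to homogeneity of the Hopf examples (the lemma is stated for arbitrary such submersions); it follows from the hypothesis that $g=g_1$ is Einstein, which forces $0=\mathrm{Ric}(X,U)=-g((\check\delta A)X,U)$, and the mixed component of $\mathrm{Ric}_t$ is a multiple of this same quantity for every $t$.
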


\begin{remark} Note that $\hat\lambda=0$ when the fibres are one-dimensional. Therefore, the canonical variations of $\pi_{\mathbb
C}$ and $\pi_{\mathbb A}$ do not provide any non-canonical Einstein metrics
on the real pseudo-hyperbolic space.
\end{remark}
\begin{remark}
For the Hopf pseudo-Riemannian submersions (3-10), the value  $t_0\not=1$ for which $g_{t_0}$ is an Einstein metric is the following:
\begin{itemize}
\item[(a)] For $\pi_{\mathbb H}$ and $\pi_{\mathbb B}$, we see that
  $\lambda'=-(4m+8)$, $\hat\lambda=-2$, and hence $t_0=\frac{1}{2m+3}$.
\item[(b)] For $\pi^1_{\mathbb O}$, $\pi^2_{\mathbb O}$ and $\pi_{\mathbb
O'}$, we have
$\lambda'
=-28$, $\hat\lambda=-6$ which gives
$t_0=\frac{3}{11}$.
\item[(c)] For $\pi_{\mathbb C,\mathbb H}$,
               $\pi_{\mathbb C,\mathbb B}$,
               $\pi_{\mathbb A,\mathbb B}$
 we have $\lambda'=-(4m+8)$,
 $\hat\lambda =-4$ and thus
$t_0=\frac{1}{m+1}$.
\end{itemize}
Clearly, the Einstein metrics $g_{t_0}$ (with $t_0\not=1$) of the canonical variations of the Hopf pseudo-Riemannian submersions (3-10) are neither isometric  to each  other, nor to the canonical metrics.
\end{remark}
\begin{definition}
    The pseudo-Riemannian manifold $(M,g)$ is called a $G$\emph{-homogeneous manifold} if $G$ is a closed Lie subgroup of the isometry group $I(M,g)$.
\end{definition}
Note that  $(M,g)$ is  a $G$-homogeneous manifold if and only if    $G$ acts effectively and transitively on $M$ and $g$ is a $G$-invariant metric on $M$. To show that  all G-homogeneous Einstein  metrics are the ones claimed in Theorem \ref{t:main}, we shall first  classify the closed, connected groups $G\subset SO_0(n-r,r+1)$ acting effectively and transitively  on those spaces.

\section{The classification of the groups acting transitively on  pseudo-hyperbolic spaces.}

Throughout this section, we shall denote by $H$ the isotropy group of an action of $G$ on  $M$.
Let $Skew_{N}$ denote the additive group of skew-symmetric $N\times N$ matrices with real entries. Let
$$U_N=\left\{
   \left(
    \begin{array}{cc}
       I_{N} & C  \\
       0 &   I_{N}  \\
    \end{array}
  \right) \ \Big|\ C\in Skew_{N}\right\}, \ \ \
  U'_{2N}= \left\{
   \left(
    \begin{array}{cc}
       I_{2N} & C  \\
       0 &   I_{2N}  \\
    \end{array}
  \right) \ \Big|\ C\in Skew_{2N}, \det(C)=1\right\},
$$
where $I_N$ denotes the identity matrix.

In the next theorem, we classify Lie subgroups of $SO_0(n-r,r+1)$ acting transitively on $H^n_r$. We shall assume $r<n$. When $n=r$, $H^n_n=S^n$ and
the classification of the Lie groups acting transitively on spheres is well known (see Borel-Montgomery-Samelson  \cite{borel,ms} for the classification of the compact groups or the survey due to Gorbatsevich and Onishchik \cite{oni-book} for the non-compact case).

\begin{theorem}\label{t:3.1}
Assume that $r<n$ and that $G$ is a closed, connected subgroup
of $SO_0(n-r,r+1)$. The group $G$ acts effectively and transitively on
$H^{n}_{r}$ if and only if $G$ is contained in Table 1.
\begin{center}
 Table 1
  \begin{tabular}{| c | l | l | c |}
  \hline
   No.  & G             & H           & $G/H$
   \\    \hline
    (1) & $SO_0(n-r,r+1)$ & $SO_0(n-r,r)$ & $H^n_{r}$ \\\hline
    (2) & $Spin_0(8,1)$  & $Spin(7)$  & $H^{15}_{7}$\\\hline
    (3) & $Spin_0(5,4)$  & $Spin_0(4,3)$  & $H^{15}_{7}$\\\hline
    (4) & $Spin_0(4,3)$  & $G_2^*$  & $H^7_3$\\\hline
    (5) & $G_2^*$  & $SU(2,1)$  & $H^6_2$\\\hline
    (6) & $G_2^*$  & $SL(3,\mathbb R)=SU^\pi(3)$  & $H^6_3$\\\hline
    (7) & $SU(m-s,s+1)$ & $SU(m-s,s)$ & $H^{2m+1}_{2s+1}$\\\hline
    (8) & $Sp(m-s,s+1)$ & $Sp(m-s,s)$ & $H^{4m+3}_{4s+3}$\\\hline
    (9) & $Sp(m-s,s+1) Sp(1)$ & $Sp(m-s,s)  Sp(1)$ & $H^{4m+3}_{4s+3}$\\\hline
    (10) & $SL(m+1,\mathbb R)=SU^\pi(m+1)$  & $SL(m,\mathbb R)=SU^\pi(m)$  & $H^{2m+1}_m$\\\hline
    (11) & $Sp(m+1,\mathbb R)=Sp^\pi(m+1)$  & $Sp(m,\mathbb R)=Sp^\pi(m)$  & $H^{4m+3}_{2m+1}$ \\\hline
    (12) & $Sp(m+1,\mathbb R)Sp(1,\mathbb R)=Sp^\pi(m+1)  Sp^\pi(1)$
                & $Sp(m,\mathbb R)Sp(1,\mathbb R)=Sp^\pi(m)  Sp^\pi(1)$  & $H^{4m+3}_{2m+1}$ \\ \hline\
    (13) & $SU(m-s,s+1)U(1)=U(m-s,s+1)$  & $SU(m-s,s)U(1)=U(m-s,s)$  & $H^{2m+1}_{2s+1}$\\\hline
    (14) & $Sp(m-s,s+1)  U(1)$  & $Sp(m-s,s)  U(1)$  & $H^{4m+3}_{4s+3}$\\\hline
    (15) & $\mathrm{GL}_+(m+1)=SU^\pi(m+1)U^\pi_0(1)=U^\pi_0(m+1)$  & $\mathrm{GL}_+(m)=SU^\pi(m)U^\pi_0(1)=U^\pi_0(m)$  & $H^{2m+1}_m$\\\hline
    (16) & $Sp(m+1,\mathbb R)\mathbb R^*_+=Sp^\pi(m+1) U^\pi_0(1)$  & $Sp(m,\mathbb R)\mathbb R^*_+=Sp^\pi(m)  U^\pi_0(1)$  & $H^{4m+3}_{2m+1}$ \\\hline
    (17)& $U_{m+1}\rtimes  SU^\pi(m+1)$   &    & $H^{2m+1}_{m}$ \\\hline
    (18)& $U_{m+1}\rtimes  SU^\pi(m+1)U_0^\pi(1)$   &    & $H^{2m+1}_{m}$ \\\hline
    (19)& $U_{2(m+1)}\rtimes   Sp^\pi(m+1)$   &    & $H^{4m+3}_{2m+1}$ \\\hline
    (20)& $U'_{2(m+1)}\rtimes   Sp^\pi(m+1)$   &    & $H^{4m+3}_{2m+1}$ \\\hline
    (21)& $U_{2(m+1)}\rtimes   Sp^\pi(m+1)U_0^\pi(1)$   &    & $H^{4m+3}_{2m+1}$ \\\hline
    (22)& $U'_{2(m+1)}\rtimes   Sp^\pi(m+1)U_0^\pi(1)$   &    & $H^{4m+3}_{2m+1}$ \\\hline
    (23)& $\mathbb R\rtimes   Sp^\pi(m+1) U_0^\pi(1)$  &    & $H^{4m+3}_{2m+1}$ \\\hline
    (24)& $U_{m+1}\rtimes   SO(m+1)\mathbb R_+^*$  &    & $H^{2m+1}_{m}$ \\\hline
    (25)& $U_{8}\rtimes   Spin(7)\mathbb R_+^*$  &    & $H^{15}_{7}$ \\\hline
    (26)& $U_{7}\rtimes   G_2\mathbb R_+^*$  &    & $H^{13}_{6}$ \\\hline
    (27)& $\mathfrak g_2\rtimes   G_2\mathbb R_+^*$  &    & $H^{13}_{6}$ \\\hline
    (28)& $ad_{Im(\mathbb O)}\rtimes   G_2\mathbb R_+^*$  &    & $H^{13}_{6}$ \\\hline
  \end{tabular}
\end{center}
where $ad_{Im(\mathbb O)}$ and $\mathfrak g_2$ are the $7$-dimensional and the $21$-dimensional irreducible $G_2$-modules, respectively, of the decomposition $so( Im(\mathbb O))=\mathfrak g_2\oplus ad_{Im(\mathbb O)}$
(see \cite[Theorem 4.1]{baez}).
\end{theorem}

\begin{proof}
The group $G$ is not compact. Indeed, if $G$ is compact, then so are $H$ and $G/H=H^n_r$, and hence $n=r$.
We split the proof into two cases: (a) $G$  semi-simple and (b) $G$ non-semisimple.
\subsection{G  semisimple} In the semisimple case, we shall obtain the cases (2--12) of Table 1 from a classification theorem due to Onishchik (see \cite[Theorem 4.1]{oni1}). We first recall some facts on transitive actions from Onishchik \cite{oni1}.
Let $K'$ and $K''$ be two closed Lie subgroups of $K$,
and let $\mathfrak k'$, $\mathfrak k''\subset\mathfrak k$ be their associated Lie algebras.
The subgroup $K'$ acts transitively on $K/K''$ if and only if $K$ can be written as a product $K=K'K''$. In the case of a semisimple triple $(\mathfrak k, \mathfrak k' , \mathfrak k'')$, that is also equivalent to $\mathfrak k=\mathfrak k'+\mathfrak k''$.  Additionally,
one has $K/K''=K'/(K'\cap K'')$.
Specializing to our case, a closed, connected subgroup $G$ of $SO_0(n-r,r+1)$ acts transitively on $H^n_{r}=SO_0(n-r,r+1)/SO_0(n-r,r)$ if and only if $so(n-r,r+1)=so(n-r,r)+\mathfrak g$. By   Onishchik's classification of the semisimple decompositions of $so(n-r,r+1)$ with $r<n$ (see \cite[Theorem 4.1 and Table 1]{oni1}), we get the cases (2--12) in our table.

\subsection{G non-semisimple} We proceed by splitting this case into two subcases: (b1) $G$ acts irreducibly on  $\mathbb R^{n+1}$ and (b2) $G$ does not act irreducibly on  $\mathbb R^{n+1}$.
\subsubsection{G acts irreducibly on $\mathbb R^{n+1}$}\label{s:irr}
In \cite[p.~321, Theorem 6]{berger1}, Berger obtained that the subgroups $G$ of $\mathrm{GL}(n+1,\mathbb R)$ acting effectively and transitively  on $H^n_r$, with $G$ acting irreducible on $\mathbb R^{n+1}$, are, except a finite number, in the cases (1), (7-9), (13-14) of Table 1. To see that all excepted Lie groups are semisimple, we shall now recall  from Wolf \cite[Proof of Theorem 3.1]{wolf} the construction of the compact form $G^*$ associated to $G$ and his proof of the fact that   $G^*$ acts transitively  on a sphere.

If $G$ acts irreducibly on $\mathbb R^{n+1}$, then so does its Lie algebra $\mathfrak g\subset \mathrm{gl}(n+1,\mathbb R)$. Hence, by
\cite[Proposition 19.1]{humphreys}, $\mathfrak g$ is reductive and $\dim Z(\mathfrak g)\leq 1$, which correlated to our working assumptions: (i)
 $G\subset SO_0(n-r,r+1)$, and (ii) $G$  non-semisimple, it gives $G=(G,G)U(1)$ (see \cite[Lemma 1.2.1]{wolf}).
There exists a Cartan involution $T$ of $so(n-r,r+1)$ such that $\mathfrak g$ is $T$-invariant (see \cite[Theorem 6]{mostow}).

Let $H_x=\{g\in  SO_0(n-r,r+1)\, |\, gx=x\}\equiv SO_0(n-r,r)$ be the isotropy group at $x\in H^n_s$ and let $\mathfrak h$ be its Lie algebra. Changing $x$, we may assume that $\mathfrak h$ is also  $T$-invariant (see \cite[Theorem 6]{mostow}, or \cite{wolf}).   The transitivity of $G$ on $H^n_r=SO_0(n-r,r+1)/H_x$ simply  implies that $so(n-r,r+1)=\mathfrak g+\mathfrak h$.
Let
$$\mathfrak s_-=\{X\in so(n-r,r+1)|T(X)=-X\}, \ \ \ \mathfrak s_+=\{X\in so(n-r,r+1)|T(X)=X\},$$
$$\mathfrak g_\pm=\mathfrak s_\pm\cap \mathfrak g, \  \ \mathfrak h_\pm=\mathfrak s_\pm\cap \mathfrak h.$$
The  associated compact forms of $so(n-r,r+1)$, $\mathfrak g$ and $\mathfrak h$, defined by
 $$so(n-r,r+1)^*=\mathfrak s_+ + i  \mathfrak s_-, \ \ \mathfrak g^*=\mathfrak g_+ + i \mathfrak g_-, \ \ \
   \mathfrak h^*=\mathfrak h_++i \mathfrak h_-,$$
  naturally satisfy the relation
  $$so(n+1)=so(n-r,r+1)^*=\mathfrak g^*+\mathfrak h^*=\mathfrak g^*+so(n).$$
 Hence, the connected compact Lie group $G^*$ (with $\mathrm{Lie}(G^*)=\mathfrak g^*$) acts transitively and effectively on the sphere $S^{n}=SO(n+1)/SO(n)$ (see Wolf \cite[Proof of Theorem 3.1]{wolf}), and thus, the non-semisimple Lie groups $G^*$ belong the infinite families $U(m)$ or $Sp(m)U(1)$. It follows that $G$ must be one of the groups in the cases (13-14).

\subsubsection{G does not act irreducibly on $\mathbb R^{n+1}_{r+1}$}
Let $V$ be a proper G-invariant subspace of  $\mathbb R^{n+1}_{r+1}$. By Wolf \cite[Lemma 8.2]{wolf2}, we have that $2(r+1)\leq n+1$ and $W_1=V\cap V^\perp$ is a $G$-invariant maximal totally isotropic subspace of dimension $r+1$.

Let $W_2$ be a totally isotropic space such that $W_1\oplus W_2\oplus U=\mathbb R^{n+1}_{r+1}$, $\dim W_2=\dim W_1=r+1$,   $U^\perp=(W_1\oplus W_2)^\perp$ and $U$ does not contain any isotropic vector. The decomposition $W_1\oplus W_2\oplus U=\mathbb R^{n+1}_{r+1}$  is called a Witt decomposition (see \cite[p.~160, Exercise 9]{onishchik-book}).
 Let $Q$ be the quadratic form on $\mathbb R^{n+1}_{r+1}$ given in the standard basis by
$Q(x,y)=\langle x,y\rangle_{\mathbb R^{n+1}_{r+1}}$.
Clearly, there exists an orthonormal basis $\{e_1,\cdots, e_{n+1}\}$ of  $\mathbb R^{n+1}_{r+1}$ with $Q(e_i,e_i)=-1$ for $i\in\{1,\cdots,r+1\}$, $Q(e_j,e_j)=1$ for $j\in\{r+2,\cdots,n+1\}$ and such that $ \{ w_1,\cdots,w_{r+1}\}$ , $ \{ w_{r+1},\cdots,w_{2r+2}\}$, $ \{ w_{2r+3},\cdots,w_{n+1}\}$ are bases of $W_1$, $W_2$ and $U$ respectively, with
\begin{eqnarray*}
    w_i= \frac{1}{2}(e_i-e_{i+r+1}), \ \ \
  w_{i+r+1} = \frac{1}{2} (e_i+e_{i+r+1}), \ \ \
  w_{k} = e_k,
\end{eqnarray*}
for any $i\in\{1,\cdots, r+1\}$ and $k\in\{2r+3,\cdots, n+1\}$.
 Let
 $$
   \eta=
   \left(
    \begin{array}{ccc}
      -I_{r+1} & 0 & 0 \\
      0 & I_{r+1} & 0 \\
      0 & 0 & I_{n-2r-1} \\
    \end{array}
  \right)  \text{\ \ and\ \ }
  W=
   \left(
    \begin{array}{ccc}
      \frac{1}{2}I_{r+1} & \frac{1}{2}I_{r+1} & 0 \\
    -\frac{1}{2}I_{r+1} & \frac{1}{2}I_{r+1} & 0 \\
      0 & 0 & I_{n-2r-1} \\
    \end{array}
  \right).
  $$
 With respect to the basis $E=\{e_1,\cdots, e_{n+1}\}$, any $g^E\in SO_0(n-r,r+1)$ satisfies
$
  (g^E)^t \eta  g^E = \eta,
$
which, with respect to the basis $\{w_1,\cdots, w_{n+1}\}$, becomes
\begin{eqnarray}\label{eq:W}
g^t [(W^{-1})^t\eta W^{-1}]g=(W^{-1})^t\eta W^{-1}.
\end{eqnarray}
Any $g\in G\subset SO_0(n-r,r+1)$ is a linear transformation on  $\mathbb R^{n+1}_{r+1}$, which can be written with respect to the basis $\{w_1,\cdots,w_{n+1}\}$ in the form
$$
g=\left(
    \begin{array}{ccc}
      A & B_0 & D_0 \\
      0 & B & D_1 \\
      0 & B_1 & D \\
    \end{array}
  \right),
$$
with $A\in \mathrm{GL}(r+1,\mathbb R)$, $B, B_0\in \mathrm{M}(r+1,\mathbb R)$,  $D_0,D_1\in M(r,n-2r-1,\mathbb R)$, $B_1\in M(n-2r-1,r,\mathbb R)$ and  $D\in \mathrm{M}(n-2r-1,\mathbb R)$.
  By \eqref{eq:W},
  we easily get that
\begin{eqnarray}\label{eq:o}
B=(A^{-1})^t, \ \ D_1=0,\ \   D\in O(n-2r-1).
\end{eqnarray}
\begin{eqnarray}\label{eq:B0skew}
2B^tB_0+2B_0^tB+B_1^tB_1=0
\end{eqnarray}
\begin{eqnarray}\label{eq:b1}
2B^tD_0+B_1^tD=0.
\end{eqnarray}
  It follows that $B_1=-2DD_0^tB$, and, hence, $B_1^tB_1=4 B^t D_0 D_0^t B$.  Let $B_0'=B_0B^{-1}$. By \eqref{eq:B0skew}, we get that
  \begin{eqnarray}\label{eq:B0skew2}
  B_0'+B_0'^t=-2D_0D_0^t
  \end{eqnarray}

\begin{lemma}
The transitivity of $G$ implies $n=2r+1$.
\end{lemma}
\begin{proof}
Suppose that $n>2r+1$.
For any $z \in\mathbb R^{r+1}\setminus\{0\}$ and any  matrices $A$,$B$,$B_1$,$B_0'$,$D$ as above, satisfying the conditions \eqref{eq:o},\eqref{eq:B0skew},\eqref{eq:b1},\eqref{eq:B0skew2}, set
$$
 \left(
    \begin{array}{c}
      y_1   \\
      y_2  \\
      y_3  \\
    \end{array}
  \right)
  =
  \left(
    \begin{array}{ccc}
      A & B_0'B & D_0 \\
      0 & B & D_1 \\
      0 & B_1 & D \\
    \end{array}
  \right)
  \left(
    \begin{array}{c}
     -\frac{z}{R}  \\
      z  \\
      0  \\
    \end{array}
  \right).
$$

With respect to the basis $\{w_i\}$,  the quadratic $Q$ writes as
\begin{eqnarray}
Q(x_1,\cdots,x_{2r+2},x_{2r+3},\cdots,x_{n+1} )
=x_1x_{r+2}+\cdots+x_{r+1}x_{2r+2}+x_{2r+3}^2\cdots+x_{n+1}^2.
\end{eqnarray}
Hence, for any $z=(z_1,\cdots,z_{r+1})\in\mathbb R^{r+1}\setminus\{0\}$, we obviously have
$Q(-\frac{z_1}{R},\cdots,-\frac{z_{r+1}}{R},z_1,\cdots,z_{r+1},0,\cdots,0)=-1$
with $R= {z_1^2+\cdots+z_{r+1}^2}$. Since $G$ acts on $H^n_r$, we have
\begin{eqnarray*}
-1&=&Q(y_1,y_2,y_3)=y_1^ty_2+y_3^ty_3
=(-\frac{1}{R}z^tA^t+z^tB^tB_0'^t)(Bz)+ z^tB_1^tB_1z\\
&=&-\frac{1}{R}z^tA^tBz+ z^t(B^tB_0'^tB+4B^tD_0D_0^tB)z
=-\frac{1}{R}z^tz+ z^tB^t(B_0'^t-2(B_0'+B_0'^t))Bz\\
&=&-1+(Bz)^t(-2 B_0'-B_0'^t)Bz .
\end{eqnarray*}
In particular, for  $z=B^{-1}e_i$ or $z=B^{-1}e_j$ or $z=B^{-1}(e_i+e_j)$, we get
$$0=e_i^t(2 B_0'+B_0'^t)e_i, \ \ 0=e_j^t(2 B_0'+B_0'^t)e_j \ \text{and}$$
$$0=(e_i+e_j)^t(-2 B_0'-B_0'^t)(e_i+e_j)
  = (-2 B_0'-B_0'^t)_{ij}+ (-2 B_0'-B_0'^t)_{ji}=-3(B_0'+B_0'^t)_{ij} ,$$
hence, by \eqref{eq:B0skew2}, $D_0D_0^t=0$, and thus $D_0=0$ and $B_1=0$. It follows that $y_3=B_1z=0$.

 Therefore $G.(-z/R,z,0)\subset H^{2r+1}_r\not=H^{n}_{r}$,
 which is a contradiction to the transitivity of $G$ on $H^{n}_{r}$.
\end{proof}
Any $g\in G\subset SO_0(r+1,r+1)$ can be written as
$$
g=
  \left(
    \begin{array}{cc}
      A & B_0'B  \\
      0 & B   \\
    \end{array}
  \right)
$$
for some skew-symmetric matrix $B_0'$  and some $A\in GL_+(r+1,\mathbb R)$ with $B=(A^{-1})^t$.
Let
$$G_0=\left\{
  \left(
    \begin{array}{cc}
      A & 0 \\
      0 & (A^{-1})^t \\
    \end{array}
  \right)
  \in SU^\pi(r+1) \Big|
  \text{there exists }  B_0'\in Skew_{r+1}\ \text{such that }
   \left(
    \begin{array}{cc}
      A & B_0'(A^{-1})^t \\
      0 & (A^{-1})^t \\
    \end{array}
  \right)\in G\
  \right\}.
  $$
 $$G_1=\left\{
  \left(
    \begin{array}{cc}
      I_{r+1} & B_0' \\
      0 & I_{r+1} \\
    \end{array}
  \right)
   \Big| B_0'\in Skew_{r+1} \text{\& there exists } A\in GL(r+1,\mathbb R) \ \text{such that }
   \left(
    \begin{array}{cc}
      A & B_0'(A^{-1})^t \\
      0 & (A^{-1})^t \\
    \end{array}
  \right)\in G
  \right\}.
  $$
We see that $G_0$, $G_1$ are subgroups in $G$; $G_1$ is normal in $G$ and $G=G_1G_0$.
For any $$h=\left(
    \begin{array}{cc}
      A & 0 \\
      0 & (A^{-1})^t \\
    \end{array}
  \right)\in G_0,$$
   let  $\theta_h:G_1\to G_1$ given by $\theta_h(B_0')=AB_0'A^t$.
It follows that $G=G_1\rtimes G_0$ is the semidirect product of $G_1$ and $G_0$.

Since
$Q(-\frac{y_1}{R},\cdots,-\frac{y_{r+1}}{R},y_1,\cdots,y_{r+1})=-1$
with $R= {y_1^2+\cdots+y_{r+1}^2}$, for any $(y_1,\cdots,y_{r+1})\in\mathbb R^{r+1}\setminus\{0\}$,
it follows that $G_0$ acts effectively and transitively on $\mathbb R^{r+1}\setminus\{0\}$.
Therefore, the Lie group $G_0\subset \mathrm{GL}(r+1,\mathbb R)$ acts irreducibly on $\mathbb R^{r+1}$ and, hence, by
\cite[Proposition 19.1]{humphreys} its Lie algebra $\mathfrak g_0$ is reductive and $\dim Z(\mathfrak g_0)\leq 1$.
Thus, $G_0$ is reductive and $\dim Z(G_0)^0\leq 1$. The action $\theta$ of the reductive Lie group $G_0$ on $G_1$ induces a completely reducible representation of $G_0\to End(\mathfrak g_1)$.
In \cite{kramer}, Kramer classified the closed connected Lie subgroups $G_0\subset GL(r+1,\mathbb R)$ acting transitively on $\mathbb R^{r+1}\setminus\{0\}$. By \cite[Theorem 6.17]{kramer},
$G_0\subset GL(r+1,\mathbb R)$ is, up to conjugation, one the following:
\begin{itemize}
\item[(a)]  $SL(m+1,\mathbb R)$, $SL(m+1,\mathbb R)\mathbb R^*_+$,
    $Sp(m+1,\mathbb R)$, $Sp(m+1,\mathbb R)\mathbb R^*_+$, or
\item[(b)]
$SL(m+1,\mathbb C)$, $SL(m+1,\mathbb C) U(1)$, $SL(m+1,\mathbb C) S_a$, $GL(m+1,\mathbb C)$,
\item[(c)]
$SL(m+1,\mathbb H)$, $SL(m+1,\mathbb H) U(1)$, $SL(m+1,\mathbb H) S_a$, $SL(m+1,\mathbb H) \mathbb C^*$,
\item[(d)]
$Sp(m+1,\mathbb C)$, $Sp(m+1,\mathbb C) U(1)$, $Sp(m+1,\mathbb C) S_a$,
$Sp(m+1,\mathbb C) \mathbb C^*$,
\item[(e)]
  $SU(m+1)S_a$, $SU(m+1)\mathbb C^*$ , $Sp(m+1)S_a$, $Sp(m+1)\mathbb C^*$,
\item[(f)]
$Spin(9,1), Spin(9,1) \mathbb R^*_+$,  $Spin(9)\mathbb R^*_+$,
\item[(g)]
$SL(m+1,\mathbb H) Sp(1)$, $SL(m+1,\mathbb H) \mathbb H^*$,   $Sp(m+1)\mathbb H^*$,
\item[(h)]
 $SO(m+1)\mathbb R^*_+$,
 $Spin(7)\mathbb R^*_+$, $G_2\mathbb R^*_+$,

\end{itemize}
when $r\geq 2$; or $G_0=\mathbb R^*$ when $r=0$; or $G_0\in\{ \mathbb C^*, SL(2,\mathbb R), SL(2,\mathbb R)\mathbb R^*_+\}$ when $r=1$. Here $S_a=\{e^{t(1+ia)|t\in\mathbb R}\}$ is a 1-dimensional subgroup of $\mathbb H^*$.  Note that in the case $r=1$,
we can write $\mathbb C^*= SO(2)\mathbb R^*_+$.

Since $SL(m+1,\mathbb R)$ acts irreducibly on $\Lambda^2\mathbb R^{m+1}=Lie(U_{m+1})$,
we get $G_1\in\{U_{m+1},\{e\}\}$ when $G_0\in\{SL(m+1,\mathbb R),SL(m+1,\mathbb R)\mathbb R^*_+\}$; and these  correspond  to (10),(15) and (17-18) in  Table 1.

Analogously to the complex quadric (see Kac \cite[\S 3]{kac}), the action of $Sp(m+1,\mathbb R)$ on $\Lambda^2\mathbb R^{2m+2}=Skew_{2m+2}$ gives a natural decomposition of $\Lambda^2\mathbb R^{2m+2}$ into two irreducible $Sp(m+1,\mathbb R)-$modules:
$Skew_{2m+2}=Lie(U'_{2m+2})\oplus\mathbb R$. Therefore
$G_1\in\{U_{2m+2},U'_{2m+2},\mathbb R,\{e\}\}$ for $G_0\in\{Sp(m+1,\mathbb R), Sp(m+1,\mathbb R)\mathbb R^*_+\}$. Identifying
$$\mathbb R=\left\{\left(
    \begin{array}{cc}
      1 & t \\
      0 & 1\\
    \end{array}
  \right)\Big|t\in\mathbb R\right\}\text{ and }
 \mathbb R^*_+=\left\{\left(
    \begin{array}{cc}
      \lambda & 0 \\
      0 & \lambda^{-1}\\
    \end{array}
  \right)\Big|\lambda\in\mathbb R^*_+\right\}=U^\pi_0(1),
$$
we see that $\mathbb R$ commutes with
$Sp^\pi(m+1) =Sp(m+1,\mathbb R)$ and $\mathbb R$ is normal in $G$.
When $G_0\in\{Sp(m+1,\mathbb R), Sp(m+1,\mathbb R)\mathbb R^*_+\}$, we get the entries (11), (16), (19-23) of Table 1.

\begin{lemma}\label{GL}
The semidirect product group $G_1\rtimes GL(m+1,\mathbb C)$ does not act transitively on $H^{4m+3}_{2m+1}$.
\end{lemma}
\begin{proof}
Let $C$ be the complex quadric $z_1^2+\cdots+z^2_{2m+2}=1$, $z=(z_1,\cdots,z_{2m+2})\in\mathbb C^{2m+2}$.
 We identify
 $\mathbb C^{2m+2}\ni z=(Re(z),Im(z))\in\mathbb R^{4m+4}$.
 We introduce the notation $z_k=x_k+iy_k$,
$t_k=y_k-x_k$ and $q_k =y_k+x_k$, for any $k\in\{1,\cdots,2m+2\}$.
Clearly, $z\in C$ if and only if
\begin{eqnarray}
 \sum_{k=1}^{2m+2} t_kq_k &=&-1 \ \ \text{and} \\
 \sum_{k=1}^{2m+2} t_k^2&=&\sum_{k=1}^{2m+2} q_k ^2 . \label{eq2}
\end{eqnarray}
  With respect to basis $\{w_k\}$,  our real quadric $H^{4m+3}_{2m+1}$ is given by
$\sum t_kq_k =-1$.

We embed $G_0=GL(m+1,\mathbb C)$ into $GL(2m+2,\mathbb R)$, and denote its image also by $GL(m+1,\mathbb C)$.
Let $(t',q')\in C\subset H^{4m+3}_{2m+1}$, $g\in G_1\rtimes GL(m+1,\mathbb C)$ and
$\left(
    \begin{array}{c} a\\b \\
    \end{array}
    \right)=g\left(
    \begin{array}{c}t' \\ q'\\
    \end{array}
    \right).
    $
Any $g\in G_1\rtimes GL(m+1,\mathbb C)$ can be written as
$$g
=
\left(
    \begin{array}{cc}
      I_{2m+2} & B_0' \\
      0 & I_{2m+2} \\
    \end{array}
  \right)
  \left(
    \begin{array}{cc}
      A & 0 \\
      0 & B\\
    \end{array}
  \right),
$$
with $A\in GL(m+1,\mathbb C)$, $B=(A^t)^{-1}$ and $B_0'+B_0'^t=0$.
Since $GL(m+1,\mathbb C)$ acts transitively on $C$, it follows that
$$
\left(
    \begin{array}{c} a\\b \\
    \end{array}
    \right)
=
\left(
    \begin{array}{cc}
      I_{2m+2} & B_0' \\
      0 & I_{2m+2} \\
    \end{array}
  \right)
    \left(
    \begin{array}{c}t \\ q\\
    \end{array}
    \right),
$$
for some $(t, q)\in C$. Then $q=b$ and $t=a-B_0'b$. By \eqref{eq2}, we get
 $$(a-B_0'b)^t(a-B_0'b)=b^tb.$$

To see that $G$ does not act transitively it is sufficient to show that there exists  $(a,b)\in\mathbb R^{4m+4}\setminus\{0\}$ with $b^ta=-1$ such that
 $$f(a,b,B_0')=(a-B_0'b)^t(a-B_0'b)-b^tb>0$$
 for any skew-symmetric matrix $B_0'$.
 Any skew-symmetric matrix $B'_0$ can be written as $B_0'=S\Sigma S^t$ for some $S\in SO(2m+2)$ and with
 \begin{equation}\label{eq:sigma}
 \Sigma
 =
 \left(
    \begin{array}{ccccc}
      0 & -x_1 & 0 & 0 &\cdots \\
      x_1 & 0 &0 & 0 &\cdots \\
     0 & 0 & 0 & -x_2 &\cdots \\
     0 & 0 & x_2 & 0 &\cdots \\
            && \vdots &&\ddots\\
    \end{array}
    \right)
\end{equation}
Set $c=S^ta$, $d=S^tb$ and note that $a^ta=c^tc$; $b^tb=d^td$,
$d^tc=b^ta=-1$.
A straightforward computation gives that
$$
f(a,b,B_0')=
\sum^{m+1}_{k=1}
\frac{(x_k(d_{2k-1}^2+d_{2k-1}^2)+(c_{2k-1}d_{2k }-c_{2k }d_{2k-1}))^2
+(c_{2k-1}d_{2k-1}+c_{2k }d_{2k})^2
}
{d_{2k-1}^2+d_{2k }^2} -(d_{2k-1}^2+d_{2k }^2).
$$
Clearly,
\begin{eqnarray*}
f(a,b,B_0')&\geq&
\sum^{m+1}_{k=1}\frac{(c_{2k-1}d_{2k-1}+c_{2k }d_{2k})^2}{\sum^{m+1}_{j=1} (d_{2j-1}^2+d_{2j }^2)} -\sum^{m+1}_{k=1}(d_{2k-1}^2+d_{2k }^2)
=-b^tb+\frac{1}{b^tb} \sum^{m+1}_{k=1} (c_{2k-1}d_{2k-1}+c_{2k }d_{2k})^2  \\
&\geq &
-b^tb+\frac{1}{(m+1)b^tb}  (\sum^{m+1}_{k=1} c_{2k-1}d_{2k-1}+c_{2k }d_{2k})^2
=-b^tb+\frac{1}{(m+1)b^tb}
\end{eqnarray*}
Let   $z\in\mathbb R^{2m+2}\setminus\{0\}$ with $R=|z|^2<\frac{1}{\sqrt{m+1}}$. Then $f(-\frac{z}{R},z,B_0')>0$ for any skew-symmetric matrix $B_0'$.
\end{proof}

\begin{lemma}
If $G_0$ is a Lie group from the list (b-f) above, then $G_1\rtimes G_0$ does not act transitively on $H^{2r+1}_{r}$.
\end{lemma}
\begin{proof}
Note that
$$SL(m+1,\mathbb C),\ SL(m+1,\mathbb C) U(1),\ SL(m+1,\mathbb C) S_a,  SU(m+1)S_a, SU(m+1)\mathbb C^* \subset GL(m+1,\mathbb C);$$
$$SL(m+1,\mathbb H), \ SL(m+1,\mathbb H) U(1), \ SL(m+1,\mathbb H) S_a,\ SL(m+1,\mathbb H) \mathbb C^*\subset GL(2(m+1),\mathbb C);$$
$$Sp(m+1,\mathbb C), \ Sp(m+1,\mathbb C) \mathbb C^*\ Sp(m+1,\mathbb C) U(1),\ Sp(m+1,\mathbb C) S_a\subset GL(2(m+1),\mathbb C);$$
Since $Spin(9,1)=SL(2,\mathbb O)$ (see Baez \cite{baez}), we have  $Spin(9,1), Spin(9,1)\mathbb R^*_+\subset GL(8,\mathbb C) $.
By lemma \ref{GL}, it follows that all these Lie groups do not act transitively on $H^{2r+1}_{r}$.
\end{proof}
\begin{lemma}
If $G_0\in\{SL(m+1,\mathbb H) Sp(1), SL(m+1,\mathbb H) \mathbb H^*, Sp(m+1)\mathbb H^*\}$, then $G_1\rtimes G_0$ does not act transitively on $H^{4m+3}_{2m+1}$.
\end{lemma}
\begin{proof}
Analogously to the proof of Lemma \ref{GL}, one can show that $G_1\rtimes SL(m+1,\mathbb H) \mathbb H^*$ does not act transitively on $H^{4m+3}_{2m+1}$. For the other two cases, the conclusion clearly follows from the inclusions $SL(m+1,\mathbb H) Sp(1),  Sp(m+1)\mathbb H^*\subset SL(m+1,\mathbb H) \mathbb H^*$.
\end{proof}
\begin{lemma}
(i) The semidirect product group $G_1\rtimes SO(m+1)\mathbb R^*_+$  acts transitively on $H^{2m+1}_{m}$ if and only if $G_1=U_{m+1}$.

(ii) the semidirect product group $G_1\rtimes Spin(7)\mathbb R^*_+$  acts transitively on $H^{15}_{7}$ if and only if $G_1=U_8$.

(iii) the semidirect product group $G_1\rtimes G_2\mathbb R^*_+$  acts transitively on $H^{13}_{6}$ if and only if $G_1\in\{U_7,\mathfrak g_2,ad_{Im(\mathbb O)}\}$.

\end{lemma}
\begin{proof} It is easy to see that $SO(m+1)\mathbb R_+^*$ does not act transitively on $H^{2m+1}_{m}$ and therefore $G_1$ is not trivial. Since $Spin(7)\subset SO(8)$ and $G_2\subset SO(7)$, we see that $Spin(7)\mathbb R^*_+$ and $ G_2\mathbb R^*_+$ do not act transitively.

(i) Since $SO(m+1)$ acts irreducibly on $\Lambda^{2}\mathbb R^{m+1}$, it follows that $G_1=U_{m+1}$. Now we show that $U_{m+1}\rtimes SO(m+1)\mathbb R^*_+$ acts transitively on $H^{2m+1}_{m}$.

Let $(p_1,p_2) \in H^{2m+1}_{m}$, with $p_1=(-1,0,\cdots,0)$ and $p_2=(1,0,\cdots,0)$. Let
$A\in SO(m+1)$, $\lambda\in R^*_+$ and $B_0'$ skew-symmetric and set
\begin{eqnarray}\label{eq:ab}
\left(
    \begin{array}{c}a \\ b\\
    \end{array}
    \right)
=
\left(
    \begin{array}{cc}
      I_{m+1} & B_0' \\
      0 & I_{m+1} \\
    \end{array}
  \right)
  \left(
    \begin{array}{cc}
      \lambda A & 0 \\
      0 &  \lambda^{-1} A \\
    \end{array}
  \right)
    \left(
    \begin{array}{c}p_1 \\ p_2\\
    \end{array}
    \right)
    .
\end{eqnarray}
We have  $a-B_0'b=\lambda Ap_1$ and $b=\lambda^{-1} Ap_2$. It follows that
\begin{eqnarray}\label{eq:hab}
h(a,b,B_0')=(a-B_0'b)^t(a-B_0'b)-\frac{(p_1^tp_1)(p_2^tp_2)}{b^tb}=0.
\end{eqnarray}
 Any skew-symmetric matrix $B'_0$ can be written as $B_0'=S\Sigma S^t$ for some $S\in SO(m+1)$ and with $\Sigma$ given by \eqref{eq:sigma}. Let  $N=(m+1)/2$ if $m$ is odd, or   $N=m/2$ if $m$ is even.

We shall need the following inequality
\begin{eqnarray}\label{ineq2}
\sum_{k=1}^N\frac{x_k^2}{y_k^2}\geq \frac{(\sum_{k=1}^Nx_k)^2}{\sum_{k=1}^Ny_k^2},
\end{eqnarray}
and the equality holds if and only if $\frac{x_1}{y_1^2}=\cdots =\frac{x_N}{y_N^2}.$

To simplify the computation, we introduce $c=S^ta$, $d=S^tb$. When $m$ is even, after modifying the matrix $S$, we may assume $d_{m+1}=0$.
By a straightforward computation, similar to the one in the proof of Lemma \ref{GL}, we get
\begin{eqnarray*}
h(a,b,B_0')&=&-\frac{(p_1^tp_1)(p_2^tp_2)}{\sum^{m+1}_{k=1}d_k^2}\\
&&+
\sum^{N}_{k=1}
\frac{(x_k(d_{2k-1}^2+d_{2k-1}^2)+(c_{2k-1}d_{2k }-c_{2k }d_{2k-1}))^2
+(c_{2k-1}d_{2k-1}+c_{2k }d_{2k})^2
}
{d_{2k-1}^2+d_{2k }^2}\\
&\geq& - \frac{1}{\sum^{m+1}_{k=1} d_k^2}
+  \frac{(\sum^{N}_{k=1}(c_{2k-1}d_{2k-1}+c_{2k}d_{2k}))^2}{\sum^{N}_{k=1}(d_{2k-1}^2+d_{2k}^2)}
= 0.
\end{eqnarray*}
By \eqref{eq:hab}, the last inequality must be an equality, thus
\begin{eqnarray}\label{eqc1d2}
\frac{ c_{ 1}d_{ 1}+c_{2  }d_{2 } }
{d_{ 1}^2+d_{2  }^2}=\cdots=\frac{ c_{2N-1}d_{2N-1}+c_{2N }d_{2N} }
{d_{2N-1}^2+d_{2N }^2}=-\frac{1}{b^tb}, \text{and} \
x_k=-\frac{ c_{2k-1}d_{2k }-c_{2k }d_{2k-1} }{ d_{2k-1}^2+d_{2k-1}^2 }.
\end{eqnarray}
By \eqref{eq:ab}, for our choice of $p_1$ and $p_2$, we get $c+\lambda^2d=\Sigma d$, which by \eqref{eqc1d2}, gives $\lambda^2=1/(b^tb)$.

To show the transitivity, we solve equation \eqref{eq:ab} for any $(a,b)\in H^{2r+1}_r$. Because of our particular choice of $p_1$ and $p_2$, the equation \eqref{eq:ab} is equivalent to $a-B_0'b=\lambda^2 b$ and $b=\lambda^{-1} Ap_2$. There exists $A\in SO(m+1)$ such that $b/\sqrt{b^tb}=Ap_2$. The equation $a-B_0'b=\lambda^2 b$ is equivalent to \eqref{eqc1d2} with $c=S^ta$, $d=S^tb$ and $S\in SO(m+1)$.
 Let $q_i$ be the row $i$ of the matrix $S^t=\{q_{ij}\}$. The system \eqref{eqc1d2} is equivalent to
\begin{eqnarray}\label{s:btq}
b^t(q_{2k-1}^tq_{2k-1})a+b^t(q_{2k }^tq_{2k})a
+\frac{1}{b^tb}(b^t(q_{2k-1}^tq_{2k-1})b+b^t(q_{2k }^tq_{2k})b)=0,
\end{eqnarray}
 for any $k\in\{1,\cdots,N\}$.
Since $S\in SO(m+1)$, we have $Trace(q_{i}^tq_{i})=1$.
 Replacing the entries $q_{(2k-1)1}^2=1- \sum_{j=2}^{m+1} q_{(2k-1)j}^2$ and $q_{(2k) 1}^2=1- \sum_{j=2}^{m+1} q_{(2k)j}^2$ of the matrices $q_{2k-1}^tq_{2k-1}$ and $q_{2k }^tq_{2k}$, the equation \eqref{s:btq} becomes linear in the variables $q_{(2k-1)1}$ and $q_{(2k)1}$. Clearly, there exists $S\in SO(m+1)$ such that \eqref{s:btq} holds for any $k\in\{1,\cdots,N\}$.
 It follows $U_{m+1}\rtimes SO(m+1)\mathbb R_+^*$ acts transitively on $H^{2m+1}_m$.

(ii) Since $Spin(7)\subset SO(8)$ acts irreducibly on $\Lambda^{2}\mathbb R^{8}$, it follows that $G_1=U_{8}$. Since $Spin(7)$ acts transitively on $S^7$, for any $b\in\mathbb R^8\setminus\{0\}$, there exists $A\in Spin(7)$ such that $b/\sqrt{b^tb}=Ap_2$. By (i), for any $(a,b)\in H^{15}_7$, there exists a skew-symmetric matrix $B'_0$ such that equation \eqref{eq:ab} holds.

(iii) The action of $G_2$ on $\Lambda^2\mathbb R^7=Lie(U_7)$ gives a decomposition of $\Lambda^2\mathbb R^7$ into two $G_2$-modules: $\Lambda^2\mathbb R^7
=so(Im(\mathbb O))
=\mathfrak g_2\oplus ad_{Im(\mathbb O)}
=Der(\mathbb O)\oplus ad_{Im(\mathbb O)}$ (see Baez \cite{baez}), with
$\dim\mathfrak g_2=14$ and
$\dim ad_{Im(\mathbb O)}=7$.
It follows $G_1\in\{U_7,\mathfrak g_2, ad_{Im(\mathbb O)}\}$.
Since $G_2$ acts transitively on $S^6$, for any $b\in\mathbb R^7\setminus\{0\}$, there exists $A\in G_2$ such that $b/\sqrt{b^tb}=Ap_2$.
By (i), for any $(a,b)\in H^{13}_6$, there exists $B'_0\in skew_7$ such that equation \eqref{eq:ab} holds.  It follows $U_{7}\rtimes G_2\mathbb R_+^*$ acts transitively on $H^{13}_6$. By a straightforward computation, we get that a such $B_0'$ can also be chosen in  $\mathfrak g_2$ or in $ad_{Im(\mathbb O)}$. Thus $\mathfrak g_2\rtimes G_2\mathbb R_+^*$ and $ad_{Im(\mathbb O)}\rtimes G_2\mathbb R_+^*$ act transitively on $H^{13}_6$.
\end{proof}
This completes the proof of Theorem \ref{t:3.1}.
\end{proof}

Unlike in the real pseudo-hyperbolic case, the groups
   $SU(m-s,s+1)$, $Sp(m-s,s+1)$, $Sp^\pi(m+1)$, $SU^\pi(m+1)$, $Sp^\pi(m+1)$, $Sp(m-s,s+1)$ and  $Sp^\pi(m+1)$,
  act only almost effectively on
  $\mathbb CH^{m}_{s}$, $\mathbb  CH^{2m+1}_{2s+1}$, $\mathbb  CH^{2m+1}_{m}$, $\mathbb AP^m$,
  $\mathbb AP^{2m+1}$, $\mathbb HH^{m}_{s}$ and $\mathbb BP^m$, respectively. In order to make these actions effective, one has to consider the action of the quotient of each  group by its center (see \cite[\S 7.12 Note on effectivity]{bes}).
Let $Z_{m+1}=\{\exp(2\pi ik/(m+1)\ |\ k=0,\cdots,m\}$. Note that
 $$Z(SU(m-s,s+1))=Z_{m+1}, \ \ Z(Sp(m-s,s+1))=Z_2,$$ $$ Z(Sp^\pi(m+1))=Z(Sp(m,\mathbb R))=Z_{2},$$
 $$Z(SU^\pi(m+1))=Z(SL(m+1,\mathbb R))=\{x\in\mathbb R\ | \ x^{m+1}=1 \}.$$

\begin{theorem}
   Let $G$ be a connected Lie group acting completely reducible on $\mathbb R^{2n+2}_{2r+2}$. One of the following holds:
   \begin{enumerate}
     \item $G$ is a closed subgroup of $SO_0(2n-2r,2r+2)$ acting on $\mathbb CH^{n}_{r}$,
     \item $G$ is a closed subgroup of $SO_0(n+1,n+1)$ acting on $\mathbb AP^{n}$,
     \item $G$ is a closed subgroup of $SO_0(4m-4s,4s+4)$ acting on $\mathbb HH^{m}_{s}$,
     \item $G$ is a closed subgroup of $SO_0(2m+2,2m+2)$ acting on $\mathbb BP^{m}$,
   \end{enumerate}
   and the action is   effective and transitive  if and only if $G$ is contained in   Table 2.
\begin{center}
 Table 2
\end{center}
\begin{center}
  \begin{tabular}{| c | l | l | c |}
  \hline
   No.  & G             & H           & $G/H$
   \\  \hline
    (1) & $SU(m-s,s+1)/Z_{m+1}$    & $S(U(1)U(m-s,s))/Z_{m+1}$ & $\mathbb CH^{m}_{s}$\\\hline
    (2) & $Sp(m-s,s+1)/Z_{2}$    & $U(1)Sp(m-s,s)/Z_{2}$ & $\mathbb  CH^{2m+1}_{2s+1}$\\\hline
    (3)& $Sp^\pi(m+1)/Z_{2}$    & $Sp^\pi(m)  U(1)/Z_{2}$  & $\mathbb  CH^{2m+1}_{m}$ \\\hline
    (4) &
     \begin{tabular}{lr}
         $SU^\pi(m+1)/Z_2$,  & if $m$ is odd \\
         $SU^\pi(m+1)$,   & if $m$ is even
      \end{tabular}
       &
    \begin{tabular}{lr}
      $S(U^\pi(m)U^\pi (1))/Z_2$,  & if $m$ is odd\\
      $S(U^\pi(m)U^\pi (1))$,& if $m$ is even
    \end{tabular}
       & $\mathbb AP^m$\\\hline
    (5) & $Sp^\pi(m+1)/Z_2$ & $Sp^\pi(m)  U^\pi_0(1)/Z_2$  & $\mathbb AP^{2m+1}$ \\\hline
    (6) & $Sp(m-s,s+1) /Z_{2} $ & $Sp(m-s,s)  Sp(1)/Z_{2}$ & $\mathbb HH^{m}_{s}$\\\hline
    (7) & $ Sp^\pi(m+1)/Z_2$  & $ Sp^\pi(m) Sp^\pi(1)/Z_2$  & $\mathbb BP^m$\\ \hline
  \end{tabular}
\end{center}
\end{theorem}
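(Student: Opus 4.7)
The strategy parallels that of Theorem~\ref{t:3.1}: reduce the classification on each projective-type space to Theorem~\ref{t:3.1} by lifting transitive actions through the Hopf pseudo-Riemannian submersions (1)--(4) from Section~2. For each of the four spaces $P$ in the statement, fix the corresponding Hopf submersion $\pi:H^N_R\to P$ and its fiber group $K$, which is respectively $U(1)$, $H^1$, $Sp(1)$, or $H^3_1$, viewed as a closed subgroup of the ambient $SO_0(\cdot,\cdot)$. Given a closed connected $G\subset SO_0(\cdot,\cdot)$ acting effectively and transitively on $P$, the requirement that the $G$-action on $H^N_R$ descend to $P$ forces $G$ to normalize $K$ inside $SO_0(\cdot,\cdot)$, so the product $\tilde G:=GK$ is a closed connected subgroup of $SO_0(\cdot,\cdot)$.

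Next, I would show that $\tilde G$ acts transitively on the total space $H^N_R$: for any $x,y\in H^N_R$, transitivity of $G$ on $P$ yields $g\in G$ with $g\pi(y)=\pi(x)$, so $x$ and $gy$ lie in the same $K$-fiber and hence $x=k(gy)$ for some $k\in K$. Theorem~\ref{t:3.1} then forces $\tilde G$ to appear in Table~1 with total space $H^N_R$. I would then enumerate those entries of Table~1 whose total space matches $H^N_R$, and discard the ones that either fail to contain $K$ or fail to normalize $K$; for instance, entry~(12) $Sp(m-s,s+1)Sp(1)$ does not normalize the Hopf circle $U(1)\subset Sp(1)$ and therefore does not descend to $\mathbb{CH}^{2m+1}_{2s+1}$. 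For each surviving $\tilde G$, I would compute the effective quotient by identifying the kernel of $\tilde G\to\mathrm{Diff}(P)$: a linear isometry that preserves every $K$-orbit through a scalar-preservation argument (choose a basis $\{e_i\}$, apply the condition to $e_i$ and $e_i+e_j$) must itself lie in $K$, so the kernel is exactly $\tilde G\cap K$. Rewriting the resulting group $\tilde G/(\tilde G\cap K)$ via the semisimple part of $\tilde G$ recovers each line of Table~2, and the stabilizer subgroups $H$ are read off from the corresponding entry of Table~1 after the same quotient.

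The principal technical obstacle is the $\mathbb{AP}^m$ case (entry~(4) of Table~2), since the center of $SU^\pi(m+1)=SL(m+1,\mathbb R)$ equals $\{x\in\mathbb R\,|\,x^{m+1}=1\}$, which is $Z_2$ when $m$ is odd and trivial when $m$ is even; this produces the parity-dependent description in Table~2 and requires separate verification. Secondary checks are needed to ensure none of the non-compact entries of Table~1, such as (14), (15), (19), (20), contribute unexpected extra lines to Table~2 in the para-quaternionic setting, and to verify that the listed isotropy subgroups $H$ are precisely the images of the Table~1 isotropies under the quotient by $\tilde G\cap K$.
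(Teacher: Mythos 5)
Your proposal follows the same overall strategy as the paper's proof: lift the action through the corresponding Hopf pseudo-Riemannian submersion to the total space $H^N_R$, reduce to Theorem~\ref{t:3.1}, and descend. The execution differs in where Table~1 is applied. The paper shows that $G$ \emph{itself} acts transitively and effectively on $H^N_R$: it first notes that $G\,K$ centralizes the (para-)complex structure generating $K$, hence lies in $U(m-s,s+1)$ (resp.\ $U^\pi_0(m+1)$, etc.), and then uses a Lie-algebra sum identity of the form $su(n-r,r+1)+u(1)=su(n-r,r)+u(1)+\mathfrak g$ to absorb $\mathfrak k$ into the isotropy algebra of this larger presentation of $H^N_R$, so that $G$ can be read off from Table~1 directly (with $G\cap K=\{e\}$ forcing the quotients by the centers). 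You instead classify $\tilde G=GK$ via Table~1 and then divide by the kernel of $\tilde G\to\mathrm{Diff}(P)$, which you correctly identify as $K$ by the linearity/scalar argument. Both routes reach Table~2; yours avoids the paper's Lie-algebra absorption step but incurs two extra obligations that you should discharge explicitly: (a) $GK$ must be shown closed and connected before Theorem~\ref{t:3.1} applies, which is not automatic when the fibre group is the non-compact $U^\pi_0(1)$ or $Sp^\pi(1)$, and (b) the passage from $\tilde G$ back to $G$ uses that the image of $GK$ in $\mathrm{Diff}(P)$ coincides with the image of $G$, so that $G\cong\tilde G/K$ as a transformation group. Your identification of the parity-dependent center of $SU^\pi(m+1)=SL(m+1,\mathbb R)$ as the delicate point in the $\mathbb AP^m$ case matches entry (4) of Table~2, and your filter ``contains and normalizes $K$'' correctly eliminates entries such as $Sp(m-s,s+1)Sp(1)$ for the complex quotients.
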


\begin{proof} Clearly, if $G$ is one of the groups in Table 2, then $G$ acts effectively and transitively on the corresponding space.
  We shall prove that if $G$ is a subgroup satisfying (1-4) and acting transitively and effectively, then it is contained in   Table 2.
  From the assumption that $G$ acts completely reducible, we get $G$ is reductive.

  The transitivity of $G$ on $\mathbb CH^{n}_{r}=H^{2n+1}_{2r+1}/U(1)$ implies that   $G\,U(1)$ acts  transitively on $H^{2n+1}_{2r+1}$. There exists a complex structure $I$ on $\mathbb{R}^{2n+2}$ such that $I\in Z(G\,U(1))$ (e.g. take $I=i\mathrm{Id}_{2n+2}\in  G\,U(1)\subset SO_0(2n-2r,2r+2)$), and therefore,
  $G\,U(1)\subset U(m-s,s+1)$. By the transitivity of $G\,U(1)$ on $H^{2n+1}_{2r+1}=SU(n-r,r+1)U(1)/SU(n-r,r)U(1)$, we get
  $$su(n-r,r+1)+u(1)=su(n-r,r)+u(1)+\mathfrak{g}+u(1)=su(n-r,r)+u(1)+\mathfrak{g}.$$ It follows that $G$ acts transitively on
  $H^{2n+1}_{2r+1}=SU(n-r,r+1)U(1)/SU(n-r,r)U(1)$.  On the other hand, the effectivity of $G$ on  $\mathbb CH^{m}_{s}$ clearly implies that  $G$ acts also effectively on  $H^{2n+1}_{2r+1}$ and $G\cap U(1)=\{e\}$.
  Hence, by Table 1 of Theorem \ref{t:3.1},
  $G \in\{SU(m-s,s+1)/Z_{m+1} , Sp(m-s,s+1)/Z_{2} , Sp^\pi(m+1)/Z_{2} \}$.

  We now repeat the argument above for the other cases. The transitivity of $G$  on $\mathbb AP^n=H^{2n+1}_{n}/U^\pi_0(1)$, implies  the transitivity of $G U^\pi_0(1)$   on  $H^{2n+1}_{n}$. The existence of a para-complex structure $I$ on $\mathbb{R}^{2n+2}$ such that $I \in Z(G\,U^\pi_0(1))$, implies that $G U^\pi_0(1)$ is a subgroup of $ SU^\pi(n+1) U^\pi_0(1)$. It follows that
  $$sl(n+1)+\mathbb{R} =sl(n)+\mathbb{R} +\mathfrak{g}+\mathbb{R} =sl(n)+\mathbb{R}+\mathfrak{g}.$$
  Therefore, $G$ acts transitivity on $H^{2n+1}_n=GL_+(n+1,\mathbb{R})/GL_+(n,\mathbb{R})$.  Obviously, the  effectivity of $G$ on $\mathbb AP^n$ implies the  effectivity on $H^{2n+1}_{n}$ and $G\cap U^\pi_0(1)=\{e\}$.
    Hence, by Table 1,  $G$ falls in the cases (4-5) of  Table 2.

   Analogously, we get that if $G$ acts effectively and transitively on $\mathbb HH^{m}_{s}=H^{4m+3}_{4s+3}/Sp(1)$ or $\mathbb BP^m=H^{4m+3}_{2m+1}/Sp^\pi(1)$, then $G=Sp(m-s,s+1)/Z_2$ or $G=Sp^\pi(m+1)/Z_2$, respectively.
\end{proof}

\section{The proof of the main theorem}
\begin{proof}[Proof of Theorem \ref{t:main}]
     Let $(G,H)$  be a pair of Lie groups contained in  Tables 1 or 2. We denote by $\mathfrak{g},\mathfrak{h}$ their associated Lie algebras and by $\mathrm{ad}:\mathfrak{g}\to \mathrm{gl}(\mathfrak{g})$ the adjoint representation of $\mathfrak{g}$. When $\mathfrak{h}$ is not semisimple, then the isotropy representation  $\chi=\mathrm{ad}:\mathfrak h\to \mathrm{gl}(\mathfrak g)$  is completely reducible  simply because the center
$Z(\mathfrak h)\in\{U(1),U^\pi_0(1)\}$ acts by semisimple endomorphisms. When  $\mathfrak{h}$ is  semisimple, $ad:\mathfrak h\to \mathrm{gl}(\mathfrak g)$ is always completely reducible (see \cite[Theorem 6.3]{humphreys}).  It follows that there exits a subspace $\mathfrak m$ in $\mathfrak{g}$ such that $\mathfrak{g}=\mathfrak{h}\oplus\mathfrak{m}$ and $[\mathfrak{h},\mathfrak{m}]\subset\mathfrak{m}$. Such a homogeneous  space $G/H$ is called reductive.

Let $(\cdot,\cdot)$ be an $ad(\mathfrak{h})$-invariant symmetric non-degenerate bilinear form on $\mathfrak{m}$, associated to a $G$-invariant pseudo-Riemannian metric $g$ on $G/H$. Let $\mathfrak{m}=\mathfrak{m}_+\oplus\mathfrak{m}_-$ be an orthogonal decomposition of $\mathfrak{m}$ such that $(\cdot,\cdot)$ is positive definite on $\mathfrak{m}_+$ and negative definite on $\mathfrak{m}_-$. There exists a Cartan involution $T$ of $\mathfrak{g}$ such that $\mathfrak{m}_+\subset\mathfrak{g}_+$ and $\mathfrak{m}_-\subset\mathfrak{g}_-$, where
$$\mathfrak{g}_+=\{X\in\mathfrak{g}\,|\, T(X)=X\},\ \ \ \mathfrak{g}_-=\{X\in\mathfrak{g}\,|\, T(X)=-X\}.$$
As in \S \ref{s:irr}, changing the point where the isotropy is computed, we may assume that the isotropy Lie algebra $\mathfrak{h}$ is
$T$-invariant. We have $T(\mathfrak{h})=\mathfrak{h}$ and thus, $T(\mathfrak{m})=\mathfrak{m}$.
Letting  $\mathfrak{h}_\pm=\mathfrak{g}_\pm\cap\mathfrak{h}$, we note that $\mathfrak{h}=\mathfrak{h}_+\oplus\mathfrak{h}_-$.

Now, we define the compact forms $\mathfrak{g}^*=\mathfrak{g}_++i\mathfrak{g}_-$, $\mathfrak{h}^*=\mathfrak{h}_++i\mathfrak{h}_-$; let  $\mathfrak{m}^*=\mathfrak{m}_++i\mathfrak{m}_-$, and take $G^*$ and $H^*$ to be the connected analytic Lie groups, with $\mathrm{Lie}(G^*)=\mathfrak{g}^*$ and $\mathrm{Lie}(H^*)=\mathfrak{h}^*$. Clearly, $G^*/H^*$ is a compact homogeneous space, and the associated bilinear form $(\cdot,\cdot)^*$ on $\mathfrak{m}^*$ is positive definite and its associated $G^*$-invariant metric $g^*$ is Riemannian (see \cite{kath1} for the definition of $(\cdot,\cdot)^*$).  Moreover, $\mathfrak{m}_+^*$ and $\mathfrak{m}_-^*$ are orthogonal to each others with respect to $(\cdot,\cdot)^*$. It means that $(\mathfrak{g},\mathfrak{h},\mathfrak{m},(\cdot,\cdot) ) $
is a $T$-dual to $(\mathfrak{g}^*,\mathfrak{h}^*,\mathfrak{m}^*,(\cdot,\cdot)^* ) $ (see Kath \cite[Definition 3.1]{kath1}).

 The compact dual triples $(G^*,H^*,G^*/H^*)$ of  all triples $(G,H,G/H)$ of Tables 1 and 2, with a non-compact  $G$, are listed in the next table.
{\small
\begin{center}
 Table 3
  \begin{tabular}{| c | l | l | c | l | l | c |}
  \hline
   No.  & $G$             & $H$           & $G/H$ & $G^*$             & $H^*$           & $G^*/H^*$
   \\    \hline
    (1) & $SO_0(n-r,r+1)$ & $SO_0(n-r,r)$ & $H^n_{r}$& $SO (n+1)$ & $SO (n )$ & $S^n $ \\\hline
    (2) & $G_2^*$  & $SU(2,1)$  & $H^6_2$& $G_2$  & $SU(3)$  & $ S^6$\\\hline
    (3) & $G_2^*$  & $ SL(3,\mathbb R)$  & $H^6_3$& $G_2$  & $SU(3)$  & $ S^6$\\\hline
    (4) & $Spin_0(4,3)$  & $G_2^*$  & $H^7_3$& $Spin(7)$  & $G_2$  & $ S^7$\\\hline
    (5) & $SU(m-s,s+1)$ & $SU(m-s,s)$ & $H^{2m+1}_{2s+1}$& $SU(m +1)$     & $SU(m )$  & $S^{2m+1}$\\\hline
    (6) & $ U(m-s,s+1)$     & $ U(m-s,s)$  & $H^{2m+1}_{2s+1}$ & $ U(m +1)$     & $ U(m )$  & $S^{2m+1} $\\\hline
    (7) & $ SU^\pi(m+1)$  & $ SU^\pi(m)$  & $H^{2m+1}_m$ & $ SU (m+1)$  & $ SU (m)$  & $S^{2m+1} $\\\hline
    (8) & $ U^\pi_0(m+1)$  & $ U^\pi_0(m)$  & $H^{2m+1}_m$& $ U(m +1)$     & $ U(m )$  & $S^{2m+1} $\\\hline
    (9) & $Spin_0(8,1)$  & $Spin(7)$  & $H^{15}_{7}$& $Spin(9)$  & $Spin(7)$  & $ S^{15}$\\\hline
    (10) & $Spin_0(5,4)$  & $Spin_0(4,3)$  & $H^{15}_{7}$& $Spin(9)$  & $Spin(7)$  & $ S^{15}$\\\hline
    (11) & $Sp(m-s,s+1)$ & $Sp(m-s,s)$ & $H^{4m+3}_{4s+3}$& $Sp(m +1)$ & $Sp(m)$ & $S^{4m+3} $\\\hline
    (12) & $Sp(m-s,s+1)U(1)$  & $Sp(m-s,s)  U(1)$  & $H^{4m+3}_{4s+3}$&$Sp(m +1)U(1)$  & $Sp(m )  U(1)$  & $S^{4m+3} $\\\hline
    (13) & $Sp(m-s,s+1) Sp(1)$ & $Sp(m-s,s)  Sp(1)$ & $H^{4m+3}_{4s+3}$& $Sp(m +1) Sp(1)$ & $Sp(m )  Sp(1)$ & $S^{4m+3} $\\\hline
    (14) & $ Sp^\pi(m+1)$  & $ Sp^\pi(m)$  & $H^{4m+3}_{2m+1}$& $Sp(m +1)$ & $Sp(m)$ & $S^{4m+3} $ \\\hline
    (15) & $ Sp^\pi(m+1) U^\pi_0(1)$  & $ Sp^\pi(m)  U^\pi_0(1)$  & $H^{4m+3}_{2m+1}$ & $ Sp (m+1)U (1) $  & $ Sp (m)  U (1)$  & $S^{4m+3} $ \\\hline
    (16) & $ Sp^\pi(m+1)  Sp^\pi(1)$ & $ Sp^\pi(m)  Sp^\pi(1)$  & $H^{4m+3}_{2m+1}$& $Sp(m +1) Sp(1)$ & $Sp(m )  Sp(1)$ & $S^{4m+3} $\\\hline
    (17) & $SU(m-s,s+1)$    & $S(U(m-s,s)U(1))$ & $\mathbb CH^{m}_{s}$ & $SU(m+1) $    & $S(U(m)U(1)) $ &
    $\mathbb CP^{m} $\\\hline
    (18) & $Sp(m-s,s+1)$    & $Sp(m-s,s)U(1)$ & $\mathbb  CH^{2m+1}_{2s+1}$& $Sp(m +1)$    & $Sp(m )U(1)$ & $\mathbb  CP^{2m+1} $\\\hline
    (19) & $SU^\pi(m+1) $ & $S(U^\pi(m)U^\pi (1))$ & $\mathbb AP^m$ & $SU (m+1) $ & $S(U (m)U (1))$ & $\mathbb CP^m$\\ \hline
    (20) & $Sp^\pi(m+1) $ & $Sp^\pi(m)  U^\pi_0(1) $  & $\mathbb AP^{2m+1}$ & $Sp (m+1)$  & $Sp (m) U(1) $  & $\mathbb  CP^{2m+1} $\\\hline
    (21) & $Sp(m-s,s+1) $ & $Sp(m-s,s)  Sp(1) $ & $\mathbb HH^{m}_{s}$& $Sp(m +1) $ & $Sp(m)  Sp(1) $ & $\mathbb HP^{m} $\\\hline
    (22) & $ Sp^\pi(m+1) $  & $ Sp^\pi(m) Sp^\pi(1) $  & $\mathbb BP^m$& $Sp(m +1) $ & $Sp(m)  Sp(1) $ & $\mathbb HP^{m} $\\ \hline
  \end{tabular}
\end{center}
}

By Kath \cite[Corollary 4.1]{kath1}, the $G$-homogeneous Einstein pseudo-Riemannian metrics on $G/H$ are in one-to-one correspondence to
the  $G^*$-homogeneous Einstein Riemannian metrics on $G^*/H^*$. Thus, by Ziller's classification of homogeneous Einstein Riemannian metrics on sphere and projective spaces (see Ziller \cite{ziller}), we get the following:
 \begin{enumerate}
   \item[(i)]   for the cases (1-8) of Table 3,  the only $G$-homogeneous Einstein pseudo-Riemannian is the constant curvature metric,
   \item[(ii)] for each of cases (9-11, 14) of Table 3,  we have only two $G$-homogeneous Einstein pseudo-Riemannian metrics: the constant curvature one and the Einstein metric of the canonical variation,
   \item[(iii)] for each of (18,   20),  we have only two $G$-homogeneous Einstein pseudo-Riemannian on $G/H$,
   \item[(iv)] the cases (12-13) are special cases of (11), and the cases (15-16) are special cases of (14),
   \item[(v)]  for the cases (17, 19, 21, 22), we have only one $G$-homogeneous Einstein pseudo-Riemannian on $G/H$.
 \end{enumerate}
\end{proof}
\begin{remark}
  We recall from Ziller \cite{ziller} that the homogeneous Einstein Riemannian metrics on $S^{4n+3}$ (associated to the canonical variation of the Hopf fibration $S^{4n+3}\to\mathbb{C}P^n$) are normal homogeneous, but the homogeneous Einstein Riemannian metrics on $S^{15}$ (associated to  the Hopf fibration $S^{15}\to S^8$) and on $\mathbb{C}P^{2n+1}$ are not even naturally reductive. Since the notions of normal homogeneity and natural reductivity are preserved under duality, it follows that 2  homogeneous Einstein metrics on $H^{15}_7$, namely  the $Sp(2,2)$ and $Sp^{\pi}(4)$-invariant metrics, and the Einstein metrics on $H^{4m+3}_{4s+3}$ are normal homogeneous, but the non-canonical homogeneous Einstein metrics on $\mathbb{C}H^{2m+1}_{2s+1}$, $\mathbb{A}P^{2m+1}$
   and the other 2 non-canonical Einstein metrics on $H^{15}_7$ (the $Spin_0(5,4)$ and $Spin_0(8,1)$-invariant metrics)
   are not naturally reductive.
\end{remark}

\begin{remark}
Our classification of Einstein homogeneous pseudo-Riemannian on $H^n_r$ is obtained in the case when $G$ acts completely reducible on $\mathbb R^{n+1}$, which is essentially the case of a reductive Lie group $G$. We plan to compute somewhere else the Einstein homogeneous pseudo-Riemannian on $H^n_r$  for the remaining cases (17-28) of Table 1, which are the solvmanifold cases.
\end{remark}
\begin{acknowledgments}
This work was partially supported by a grant of the Romanian National
Authority for Scientific Research, CNCS - UEFISCDI, project number
PN-II-ID-PCE-2011-3-0362.
\end{acknowledgments}

\bibliographystyle{amsplain}
\providecommand{\bysame}{\leavevmode\hbox to3em{\hrulefill}\thinspace}
\providecommand{\MR}{\relax\ifhmode\unskip\space\fi MR }
\providecommand{\MRhref}[2]{%
  \href{http://www.ams.org/mathscinet-getitem?mr=#1}{#2}
}
\providecommand{\href}[2]{#2}

\end{document}